\newcommand{\bra}{\langle}
\newcommand{\ket}{\rangle}
\newcommand{\www}{{\mathcal W}}
\newcommand{\cut}{\hbox{\hskip 1pt\vrule width3pt height2pt depth1pt \hskip1pt}}
\newcommand{\dg}{{\delta g}}
\newtheorem{thm}{Theorem}[section]
\newtheorem*{thm*}{Theorem}
\newtheorem{lem}[thm]{Lemma}
\theoremstyle{definition}
\begin{document}
\title{Singularities in the entropy of asymptotically large simple graphs}
\author{Charles Radin and Lorenzo Sadun}

\address{Charles Radin\\Department of Mathematics\\The University of
  Texas at Austin\\ Austin, TX 78712} \email{radin@math.utexas.edu}
\address{Lorenzo Sadun\\Department of Mathematics\\The University of
  Texas at Austin\\ Austin, TX 78712} \email{sadun@math.utexas.edu}

\thanks{This work was partially supported by NSF
  grants DMS-1208941 and DMS-1101326} 
\subjclass[2010]{05C35, 05C30}
\keywords{graphon, extremal graphs, phase transition, random graph,
  graph limits}

\begin{abstract}
We prove that the asymptotic entropy of large simple graphs, as a
function of fixed edge and triangle densities, is nondifferentiable
along a certain curve.
\end{abstract}

\maketitle

\setlength{\baselineskip}{.6cm}

\section{Introduction}

Extremal graph theory \cite{Bo} deals with graphs in which conflicting
graph invariants are on the verge of contradiction. A classic example
due to Mantel from 1907 shows that, among graphs of order $n$, as the
edge number increases beyond $\lfloor n^2/4 \rfloor$ a graph can no
longer be bipartite and must contain a triangle. Generalizing
slightly, the Mantel problem is to determine those graphs with
fixed edge density $e$ which minimize, and those which maximize, the
possible values $t$ of triangle density. In this vein extremal graph
theory is concerned with qualitative features of graphs with
invariants on the boundary $\partial S$ of the space $S$ of possible
values of some particular set of invariants which, for the Mantel
problem, are the edge and triangle densities, $e$ and $t$. (The set
$S$ for the Mantel problem was finally determined in \cite{Ra}, and
the optimizing graphs in \cite{PR}.) In this paper we are concerned
with a natural generalization of extremal graph theory to the interior
of $S$. Borrowing an idea from physics, it is possible that
qualitative graph features which are forced in an absolute sense on a
subset $P$ of the boundary of $S$ are still retained for typical
graphs in some phase, a region of $S$ abutting $P$. (We define
`phase' below and `typical' in the next section.)  For instance for
the Mantel problem there is evidence in \cite{RS} that for edge
density less than 1/2 there is a region of $S$ abutting the interval
$(e,t) \in [0,1/2]\times\{0\}$ of $\partial S$, in which now a typical
graph is nearly bipartite. (The vertices are divided into two clusters
of nearly equal size, with nearly all edges connecting vertices in one cluster to vertices in the other.) 
One objective in such a study is `phase
transitions', boundaries between phases in which the competition
between invariants which has traditionally been studied on $\partial
S$ is extended into the interior of $S$, and now concerns
typical graphs. We study typical graphs using entropy and the graph
limit formalism, which we sketch after the following summary of
results.

Consider the set $\hat G^n$ of simple graphs $G$ with set $V(G)$ of
(labeled) vertices, edge set $E(G)$ and triangle set $T(G)$, where
the cardinality $|V(G)|=n$. (`Simple' means the edges are undirected
and there are no multiple edges or loops.) We will be concerned with
the asymptotics of $\hat G^n$ as $n$ diverges, specifically in the
relative number of graphs as a function of the cardinalities $|E(G)|$
and $|T(G)|$.

Let $\displaystyle Z^{n,\alpha}_{e,t}$ be the number of graphs in
$\hat G^n$ such that the edge and triangle densities, $e(g)$ and
$t(g)$, satisfy:
\begin{equation} e(G)\equiv \frac{|E(G)|}{{n \choose 2}}
\in (e-\alpha,e+\alpha) \quad \hbox{ and } \quad
t(G)\equiv \frac{|T(G)|}{{n\choose 3}} \in (t-\alpha,t+\alpha).
\end{equation} 
Graphs $g$ in $\displaystyle \cup_{n\ge 1}\hat G^n$ are known to have edge
and triangle densities, $(e(g),t(g))$, whose accumulation points form
a compact subset $R$ of the
$(e,t)$-plane bounded by three curves, $c_1: (e,e^{3/2}), \ \ 0\le e\le
1$, the line segment $l_1: \ (e,0), \ \ 0\le e\le 1/2$, and a certain scalloped
curve $(e,h(e)),\ \ 1/2\le e\le 1$, lying above the curve
$(e,e(2e-1)), \ \ 1/2\le e\le 1$, and meeting it when
$e=e_k=k/(k+1),\ \ k\ge 1$; see \cite{Ra, PR} and references therein, and Figure 1.
(Note the minor shift in emphasis from $S$, as discussed earlier, to
the accumulation points $R$ of $S$.)

\begin{figure}[h]
\vskip.3truein
\includegraphics[width=3in]{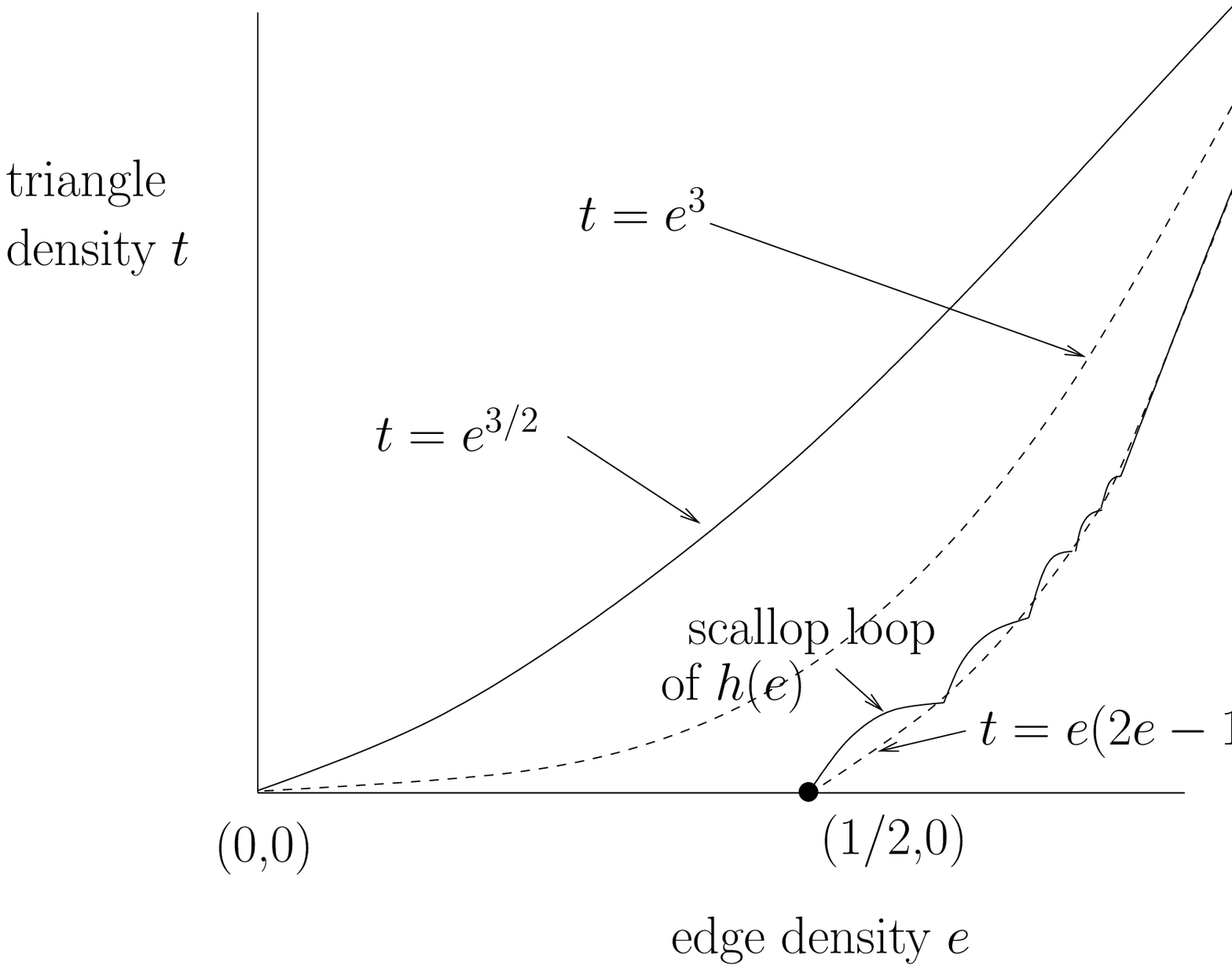}
\caption{The phase space $R$, outlined 
in solid lines}
\label{phasefig}
\end{figure}

We are interested in the relative number of graphs with given
numbers of edges and triangles, asymptotically in the number of
vertices. More precisely we will analyze 
the entropy density, the exponential rate of growth of 
$Z^{n,\alpha}_{e,t}$ as a function of $n$. First consider
\begin{equation} \label{entropydef} 
s^{n,\alpha}_{e,t}=\frac{\ln(Z^{n,\alpha}_{e,t})}{n^2}, \hbox{ and } 
s(e,t)=\lim_{\alpha\downarrow 0}\lim_{n\to \infty}s^{n,\alpha}_{e,t}.
\end{equation} 
The limits defining the entropy density $s(e,t)$ are proven to exist in \cite{RS}.
The  objects of interest for us are the qualitative features of $s(e,t)$ in
the interior of $R$. In particular, a phase is commonly defined as a maximal
connected open subset in which the entropy density is analytic \cite{RY}.
Our main result is:
\begin{thm}\label{thm0}
In the interior of its domain $R$ the entropy density $s(e,t)$ satisfies:
\begin{equation}\label{diff}
s(e,e^3)-s(e,t)\ge c|t-e^3|
\end{equation}
for some $c=c(e)>0$.
Therefore for fixed $e$, $s(e,t)$ attains its maximum at $t=e^3$ but
is not differentiable there. For $t<e^3$ we have the
stronger inequality
\begin{equation}\label{strong}
s(e,e^3)-s(e,t)\ge \tilde c|t-e^3|^{\frac{2}{3}}.
\end{equation}
for some $\tilde c=\tilde c(e)>0$.
\end{thm}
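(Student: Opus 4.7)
The plan is to work from the graphon variational formula for $s(e,t)$ established in \cite{RS},
\begin{equation*}
s(e,t)\;=\;\sup_W\,\frac{1}{2}\int_0^1\!\!\int_0^1 H(W(x,y))\,dx\,dy,
\end{equation*}
where $H(p)=-p\log p-(1-p)\log(1-p)$ and the supremum runs over symmetric measurable $W\colon[0,1]^2\to[0,1]$ with $\int W=e$ and $\int W(x,y)W(y,z)W(z,x)\,dx\,dy\,dz=t$. The constant graphon $W\equiv e$ is admissible at $t=e^3$, so $s(e,e^3)=H(e)/2$, and it will suffice to lower bound the entropy deficit $s(e,e^3)-\tfrac{1}{2}\int H(W)$ on every admissible $W$.

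Setting $U=W-e$, so $\int U=0$, and $\bar U(y)=\int_0^1 U(x,y)\,dx$, and expanding $(e+a)(e+b)(e+c)$ with $a=U(x,y)$, $b=U(y,z)$, $c=U(z,x)$, one obtains the exact identity
\begin{equation*}
t-e^3\;=\;3e\!\int_0^1\! \bar U(y)^2\,dy\;+\;\int U(x,y)\,U(y,z)\,U(z,x)\,dx\,dy\,dz.
\end{equation*}
On the entropy side, since $|H''(p)|=1/(p(1-p))\ge 4$ on $[0,1]$, Taylor's theorem with remainder gives the pointwise inequality $H(e)-H(e+U)\ge -H'(e)\,U+2U^2$. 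Integrating and using $\int U=0$ yields
\begin{equation*}
s(e,e^3)-\tfrac{1}{2}\!\int\! H(W)\;\ge\;\int U^2,
\end{equation*}
which reduces the theorem to bounding $\int U^2$ from below in terms of $|t-e^3|$.

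For $t>e^3$, I would split admissible $U$ according to which of the two terms in the triangle identity contributes at least half of the excess. If the degree-squared term does, Cauchy--Schwarz $\int \bar U^2\le\int U^2$ forces $\int U^2\ge(t-e^3)/(6e)$. If instead the cube term does, iterated Cauchy--Schwarz (first in $z$, then in $(x,y)$) yields the trace-type bound
\begin{equation*}
\Bigl|\int U(x,y)\,U(y,z)\,U(z,x)\,dx\,dy\,dz\Bigr|\;\le\;\Bigl(\int U^2\Bigr)^{3/2},
\end{equation*}
so $\int U^2\ge((t-e^3)/2)^{2/3}$. Since $R$ is compact and $t-e^3$ is therefore bounded, combining the two cases gives $\int U^2\ge c(e)(t-e^3)$ with $c(e)>0$, proving \eqref{diff} for $t>e^3$.

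For $t<e^3$ the degree-squared term is nonnegative and can only push $t$ upward, so the cube term alone must absorb the deficit: $\int U(x,y)U(y,z)U(z,x)\le t-e^3<0$. The trace-type inequality then immediately gives $\int U^2\ge(e^3-t)^{2/3}$, which is \eqref{strong}; since $|t-e^3|\le 1$ on $R$ this also yields \eqref{diff} in this regime. Non-differentiability at $t=e^3$ follows at once: \eqref{diff} forces the right-derivative of $s(e,\cdot)$ at $e^3$ to be at most $-c<0$, while \eqref{strong} forces the left-derivative to be $+\infty$. I expect the main technical hurdle to be upgrading the infinitesimal picture at $W\equiv e$ to an inequality valid uniformly over every admissible graphon; what makes this possible is that both the entropy-deficit inequality (from global uniform convexity of $-H$) and the trace-type inequality (from Cauchy--Schwarz rather than a Taylor expansion) are global rather than asymptotic statements.
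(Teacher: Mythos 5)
Your proposal is correct and follows essentially the same route as the paper: write $g = g_e + \delta g$, expand $t - e^3$ as a (non-negative) quadratic form plus a cubic term in $\delta g$, bound the cubic term by $\bigl(\int \delta g^2\bigr)^{3/2}$, and bound the entropy deficit below by a constant times $\int \delta g^2$ via the strict convexity of $I_0$. The only noteworthy differences are cosmetic: you prove the key trace-type inequality $|Tr(T_{\delta g}^3)|\le (Tr(T_{\delta g}^2))^{3/2}$ by iterated Cauchy--Schwarz rather than by the spectral theorem, and you use the uniform bound $I_0''\ge 2$ in place of the paper's sharper $f_-(e)$; both variations are valid and, if anything, slightly more elementary.
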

So the graph of $s(e,t)$ has its maxima, varying $t$ for fixed $e$, on
a sharp crease at the curve $t=e^3,\ 0<e<1$, and is not concave for
$t$ just below $e^3$. The importance of the result lies in the implication from
(\ref{diff}) of the lack of differentiability of $s(e,t)$ on the crease,
and thus the existence of a phase transition, and the implication
from (\ref{strong}) of a lack of concavity of $s(e,t)$, discussed below.
\smallskip

We begin with a quick review of the formalism of graph limits, as recently
developed in \cite{LS1, LS2, BCLSV, BCL, LS3}; see also the recent book
\cite{Lov}.
The main value of this formalism here is that one can use large
deviations on graphs with independent edges \cite{CV} to give an
optimization formula for $s(e,t)$ \cite{RS}.


\section{Graphons}

Consider the set $\www$ of all symmetric, measurable functions 
\begin{equation} g:(x,y)\in [0,1]^2\to g(x,y)\in [0,1].\end{equation} 
Think of each axis as a continuous set of vertices of a graph. For a
graph $G\in \hat G^n$ one associates 
\begin{equation} \label{checkerboard} g^G(x,y) = \begin{cases} 1 &\hbox{if }(\lceil nx \rceil , \lceil ny \rceil
)\hbox{ is an edge of }G\cr 0 & \hbox{otherwise,} \end{cases}
\end{equation} 
where $\lceil y \rceil$ denotes the smallest integer greater than or
equal to $y$. 
For $g\in \www$ and simple graph $H$ we define
\begin{equation} t(H,g)\equiv \int_{[0,1]^\ell} \prod_{(i,j)\in E(H)}g(x_i,x_j)\,dx_1\cdots
   dx_\ell, \end{equation} 
where $\ell = |V(H)|$, and note that for a graph $G$, $t(H,g^G)$ is the 
density of graph homomorphisms $H\to G$:
\begin{equation} \frac{|\hbox {hom}(H,G)|}{|V(G)|^{|V(H)|}}. \end{equation} 
We define an equivalence relation on $\www$ as follows: $f\sim g$
if and only if $t(H,f)=t(H,g)$ for every simple graph $H$.  Elements
of $\www$ are called ``graphons'', elements of the quotient space $\tilde \www$ are called ``reduced graphons'', and
the class containing $g\in \www$ is denoted $\tilde g$. 
Equivalent functions in $\www$ differ by a change of variables in the
following sense. Let $\Sigma$ be the space of measure-preserving maps
$\sigma: [0,1]\to [0,1]$, and for $f$ in $\www$ and $\sigma\in
\Sigma$, let $f_\sigma(x,y)\equiv f(\sigma(x),\sigma(y))$. Then $f\sim
g$ if and only if there exist $\sigma, \sigma'$ in $\Sigma$ such that
$f_\sigma =g_{\sigma'}$ almost everywhere; see Cor. 2.2 in \cite{BCL}. The space
$\www$  is compact with respect to the `cut metric'
defined as follows.
First, on ${\www}$ define:
\begin{equation}
{d}_{\cut}(f,g)\equiv \sup_{S,T\subseteq [0,1]}\Big| \int_{S\times
  T}[f(x,y)-g(x,y)]\, dxdy\Big|. 
\end{equation}
Then on $\tilde \www$ define the cut metric by:

\begin{equation}
{\tilde d}_{\cut}(\tilde f,\tilde g)\equiv \inf_{\sigma,\sigma'\in \Sigma}
{d}_{\cut}(f_{\sigma},g_{\sigma'}). 
\end{equation}

We will use the fact, which follows easily from Lemma 4.1 in 
\cite{LS1}, that the cut metric is equivalent to the metric
\begin{equation} \delta_{_{\hbox{hom}}}(\tilde f,\tilde g)\equiv \sum_{j\ge 1}
\frac{1}{2^j}|t(H_j,f)-t(H_j,g)|, 
\end{equation} 
where $\{H_j\}$ is a countable set of simple graphs, one from each
graph-equivalence class. 
Also note that
if each vertex of a finite graph is split into the same number of
`twins', each connected to the same vertices, the result stays in the
same equivalence class, so for a convergent sequence $\tilde g^{G_j}$ one may
assume $|V(G_j)|\to \infty$. 

 The following was proven in \cite{RS}.

\begin{thm}\label{thm1} (\cite{RS})
  For any possible pair $(e,t)$, $s(e,t) = \max [-I(g)]$, where the
  maximum is over all graphons $g$ with $e(g)=e$ and $t(g)=t$,
  where 
\begin{equation} e(g)=\int_{[0,1]^2}  g(x,y) \, dxdy, \qquad t(g) =
  \int_{[0,1]^3} g(x,y) g(y,z) g(z,x) \, dxdydz
\end{equation}
  and the rate function is 
\begin{equation}
I(g) = \int_{[0,1]^2} I_0(g(x,y)) \,dxdy, \hbox{
    \rm where } I_0(u)= \frac{1}{2} \left [u \ln(u) +
    (1-u)\ln(1-u)\right ].
\end{equation} 
\end{thm}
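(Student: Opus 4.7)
The plan is to derive this variational formula by combining the Chatterjee--Varadhan large deviation principle for Erd\H{o}s--R\'enyi graphs with the elementary identity
\begin{equation*}
Z^{n,\alpha}_{e,t} = 2^{\binom{n}{2}}\, P\bigl(G(n,1/2) \in A^n_{e,t,\alpha}\bigr),
\end{equation*}
where $A^n_{e,t,\alpha} \subset \hat G^n$ is the set of graphs whose edge and triangle densities fall into the open $\alpha$-windows around $(e,t)$. Taking logarithms, dividing by $n^2$, and using $\binom{n}{2}/n^2 \to 1/2$, the task reduces to computing $\lim_n n^{-2}\ln P(G(n,1/2) \in A^n_{e,t,\alpha})$ and then sending $\alpha \downarrow 0$.

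The next step is to translate $A^n_{e,t,\alpha}$ into graphon language. Using $e(G) = t(K_2, g^G)$ and noting that $t(G)$ differs from $t(K_3, g^G)$ only by an $O(1/n)$ term (from ordered triples with repeated indices), membership of $G$ in $A^n_{e,t,\alpha}$ is, for large $n$, equivalent to $\tilde g^G$ lying in the open set $U_\alpha \subset \tilde\www$ defined by $|t(K_2,g)-e|<\alpha$ and $|t(K_3,g)-t|<\alpha$. I would then invoke the Chatterjee--Varadhan theorem \cite{CV}: under $G(n,1/2)$ the random reduced graphon satisfies a large deviation principle on the compact space $(\tilde\www, \tilde d_{\cut})$ with speed $n^2$ and good rate function
\begin{equation*}
I_{1/2}(g) = \frac{1}{2}\int_{[0,1]^2} \bigl[g\ln(2g)+(1-g)\ln(2(1-g))\bigr]\,dx\,dy = I(g) + \frac{\ln 2}{2}.
\end{equation*}
Continuity of $t(K_2,\cdot)$ and $t(K_3,\cdot)$ in the cut metric makes $U_\alpha$ open, with $\bar U_\alpha$ contained in the corresponding closed-inequality neighborhood. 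The LDP lower bound on $U_\alpha$, the upper bound on $\bar U_\alpha$, and the cancellation of the constant $\ln 2/2$ in $I_{1/2}$ against the prefactor $\binom{n}{2}\ln 2/n^2$ together yield
\begin{equation*}
-\inf_{\tilde g \in \bar U_\alpha} I(g) \;\le\; \liminf_{n}\, s^{n,\alpha}_{e,t} \;\le\; \limsup_{n}\, s^{n,\alpha}_{e,t} \;\le\; -\inf_{\tilde g \in U_\alpha} I(g).
\end{equation*}

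To conclude I would let $\alpha \downarrow 0$. The level set $K_{e,t} = \{\tilde g : t(K_2,g)=e,\, t(K_3,g)=t\}$ is closed, and nonempty whenever $(e,t)$ is a possible pair (i.e.\ lies in $R$). Since $\tilde\www$ is cut-metric compact and $I$ is lower semicontinuous (a standard property of integrated relative entropies), $I$ attains its minimum on $K_{e,t}$ at some $g^*$ with value $I^*$. Because $K_{e,t} \subset U_\alpha$, we have $\inf_{U_\alpha} I \le I^*$; conversely, picking $g_\alpha \in \bar U_\alpha$ nearly minimizing $I$ and extracting a cut-metric convergent subsequence $g_{\alpha_k} \to g_\infty$, continuity of the homomorphism densities forces $g_\infty \in K_{e,t}$, and lower semicontinuity of $I$ gives $I(g_\infty) \le \liminf I(g_{\alpha_k})$. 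Both bounds therefore converge to $-I^*$, establishing $s(e,t) = \max\{-I(g) : e(g)=e,\, t(g)=t\}$ with the maximum attained.

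The main difficulty I anticipate is the passage $\alpha \downarrow 0$: the LDP naturally produces an open-set lower bound and a closed-set upper bound, and matching them in the limit requires that the infima over shrinking neighborhoods of $K_{e,t}$ actually converge to $I^*$. This is the step where compactness of $\tilde\www$ in the cut metric, together with lower semicontinuity of $I$ and joint continuity of the edge and triangle density functionals, is indispensable --- without them one could neither guarantee the existence of a maximizer nor the convergence of the neighborhood infima. The Chatterjee--Varadhan LDP itself is the other substantial input, which I would use as a black box.
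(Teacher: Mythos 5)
Your argument is essentially the proof given in the cited source \cite{RS}: tilt to $G(n,1/2)$ via $Z^{n,\alpha}_{e,t}=2^{\binom{n}{2}}P(G(n,1/2)\in A^n_{e,t,\alpha})$, apply the Chatterjee--Varadhan LDP (whose rate function at $p=1/2$ is $I(g)+\tfrac{\ln 2}{2}$, cancelling the prefactor), and close the $\alpha\downarrow 0$ gap using compactness of $\tilde\www$, lower semicontinuity of $I$, and cut-metric continuity of the density functionals. One small transcription slip: your displayed chain has $U_\alpha$ and $\bar U_\alpha$ interchanged --- the LDP gives $\liminf_n s^{n,\alpha}_{e,t}\ge -\inf_{U_\alpha}I$ (open set) and $\limsup_n s^{n,\alpha}_{e,t}\le -\inf_{\bar U_\alpha}I$ (closed set) --- but your surrounding prose and the limiting argument use the correct orientation, so the proof stands.
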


\section{Proof of Theorem \ref{thm0}}

\begin{proof}
Fix a graphon $g$ with edge density $e$. We can 
always write such a graphon as $g = g_e + \dg$ where $g_a$
is the constant function on $[0,1]^2$ with value $a$. We then compute

\begin{eqnarray}
\delta t(g) &:=& t(g) - e^3 = 3e^2\int_{[0,1]^2} \dg(x,y)\,dxdy + 
3e\int_{[0,1]^3} \dg(x,y) \dg(y,z)\,dxdydz\cr
&&+ \int_{[0,1]^3} \dg(x,y) \dg(y,z) \dg(z,x)\, dxdydz.
\end{eqnarray}
The first term on the right hand side is zero, since $\int_{[0,1]^2}
\delta g(x,y)\, dxdy = \delta e = 0$.  If we think of $\dg$ as the
integral kernel of the Hermitian trace class operator $T_\dg$ on
$L^2([0,1])$, then using the inner product $\langle\, \cdot \, ,\, \cdot\, \rangle$ and
trace $Tr$
we can rewrite the remaining terms as
\begin{equation}
\delta t = 3 e\langle \phi_1, T_\dg^2 \phi_1 \rangle + Tr(T_\dg^3),
\end{equation}
where $\phi_1(x)=1$ is the constant function on $[0,1]$.
Note that the first
term is non-negative.  Using again the fact that $\int_{[0,1]^2} \dg(x,y)\,dxdy=0$,
\begin{eqnarray}\label{Ibelow}
\delta I &= & \int_{[0,1]^2} [I_0(e+\dg(x,y))-\dg(x,y) I_0'(e) -I_0(e)]\,dxdy \cr  & =& 
\int_{[0,1]^2} \frac{I_0(e+\dg(x,y))-\dg(x,y) I_0'(e) -I_0(e)}{\dg(x,y)^2} \dg(x,y)^2\,dxdy \cr
&\ge& f_-(e) \int_{[0,1]^2} \dg(x,y)^2\,dxdy, 
\end{eqnarray}
where
\begin{equation}\label{positive}
f(e,x) = \frac{I_0(e+x)-xI_0'(e)-I_0(e)}{x^2},
\end{equation}
and
$f_-(e) = \inf_x f(e,x)$
is a positive number less than or equal to $\frac{I_0''(e)}{2}=\frac{1}{4e(1-e)}$.

\begin{lem}\label{lem} $|Tr(T_\dg^3)| \le (Tr(T_\dg^2))^{3/2}$, with equality
if and only if $T_\dg$ is a rank 1 operator. \end{lem}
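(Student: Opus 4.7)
The plan is to diagonalize $T_\dg$ via the spectral theorem and reduce the statement to a standard $\ell^p$--norm comparison on its eigenvalues. Because $\dg$ is a real, symmetric, bounded kernel, $T_\dg$ is a self-adjoint Hilbert--Schmidt (hence compact) operator on $L^2([0,1])$, and the spectral theorem supplies a real eigenvalue sequence $\{\lambda_i\}$ with $\sum_i \lambda_i^2 <\infty$. Then $Tr(T_\dg^2) = \sum_i \lambda_i^2$ and, since $T_\dg$ is bounded and $T_\dg^2$ trace class, $Tr(T_\dg^3) = \sum_i \lambda_i^3$ is absolutely convergent.

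With that reduction in hand the inequality becomes $|\sum_i \lambda_i^3|\le (\sum_i \lambda_i^2)^{3/2}$, which I would establish by the chain
\[
\Bigl|\sum_i \lambda_i^3\Bigr| \;\le\; \sum_i |\lambda_i|^3 \;\le\; \Bigl(\sup_i |\lambda_i|\Bigr)\sum_i \lambda_i^2 \;\le\; \Bigl(\sum_i \lambda_i^2\Bigr)^{1/2}\sum_i \lambda_i^2,
\]
using the triangle inequality, the pointwise bound $|\lambda_i|^3 \le (\sup_j|\lambda_j|)\,\lambda_i^2$, and the elementary estimate $\sup_i |\lambda_i|\le (\sum_j \lambda_j^2)^{1/2}$.

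For the equality clause I would argue that the last inequality is strict whenever two or more of the $\lambda_i$ are nonzero, so equality in the lemma forces $T_\dg$ to have at most one nonzero eigenvalue, i.e., to be rank one. Conversely, if $T_\dg=\lambda\,\phi\otimes\phi$ for a unit vector $\phi$, then $Tr(T_\dg^2)=\lambda^2$ and $Tr(T_\dg^3)=\lambda^3$, and equality holds.

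There is no substantive obstacle here; the only care needed is to verify that the spectral sums genuinely equal the traces, which follows from $\dg\in L^\infty([0,1]^2)\subset L^2([0,1]^2)$ and the fact that eigenvectors of $T_\dg$ (together with a basis for its kernel) form a complete orthonormal system of $L^2([0,1])$.
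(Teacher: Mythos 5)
Your proof is correct and takes essentially the same route as the paper: diagonalize the self-adjoint operator $T_\dg$, identify the traces with spectral sums, and bound $\bigl|\sum_i \lambda_i^3\bigr|$ by the identical chain $\sum_i|\lambda_i|^3 \le (\sup_i|\lambda_i|)\sum_i\lambda_i^2 \le (\sum_i\lambda_i^2)^{3/2}$, with equality forcing at most one nonzero eigenvalue. The one small refinement you add (and it is a genuine improvement) is to invoke only the Hilbert--Schmidt property of $T_\dg$, which follows immediately from $\dg\in L^\infty([0,1]^2)\subset L^2([0,1]^2)$ and already guarantees that $T_\dg^2$ and $T_\dg^3$ are trace class; the paper's assertion that $T_\dg$ itself is trace class is stronger than needed and not justified there.
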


\begin{proof} Since $T_\dg$ is an Hermitian trace class operator it has pure
discrete spectrum. If $\{ \mu_i\}$ are the eigenvalues of $T_\dg$, then
\begin{equation}
|Tr(T_\dg^3)| = |\sum_i \mu_i^3| \le \sum_i |\mu_i^3| \le
\max_j|\mu_j| \sum_i \mu_i^2 \le (\sum_i \mu_i^2)^{3/2} = [Tr(T_\dg^2)]^{3/2}.
\end{equation}
If $T_\dg$ has rank one, then $Tr(T_\dg^3)=\mu^3 = \pm [Tr(T_\dg^2)]^{3/2}$. If 
$T_\dg$ has rank bigger than 1, then $\max_j(\mu_j)$ is strictly smaller
than $\sqrt{\sum_i \mu_i^2}$. 
\end{proof}


We next give an estimate for $I(g)$ when $t<e^3$.
If $\delta t<0$, then 
\begin{equation} -\delta t = -Tr(T_\dg^3) - 3 e \langle \phi_1, T_\dg^2\phi_1 \rangle
\le -Tr(T_\dg^3) \le [Tr(T_\dg^2)]^{3/2} \le \left (\frac{\delta I}{f_-(e)}\right)
^{3/2}.
\end{equation}
This implies that 
\begin{equation} \label{deltat23}
\delta I \ge f_-(e) (-\delta t)^{2/3}.
\end{equation}
Using $|\delta t| \le e^3$ this also implies a linear estimate
\begin{equation}
\delta I \ge \frac{f_-(e)}{e} |\delta t|
\end{equation}
for $\delta t < 0$. 

Finally, we estimate $I(g)$ when $t>e^3$. Since $\langle \phi_1, T_\dg^2 \phi_1 \rangle \le Tr(T_\dg^2)$, 
and since $Tr(T_\dg^2) \le 1$, we have
\begin{equation}\label{deltat1}
\delta t \le Tr(T_\dg^3) + 3e Tr(T_\dg^2) \le (Tr(T_\dg)^2)^{3/2} + 3e Tr(T_\dg^2) \le (3e+1) Tr(T_\dg^2)
\le \frac{(3e+1) \delta I}{f_-(e)},
\end{equation}
so
\begin{equation} \delta I \ge \frac{f_-(e) \delta
    t}{3e+1}. 
\end{equation}
\end{proof}

\section{Other graph models}

We now generalize Theorem \ref{thm0} to graph models where we keep track of
the number of graph homomorphisms $H \to G$ for some fixed graph $H$,
not necessarily triangles.  We can compute the entropy of graphs with $e(g^G)$ within
$\alpha$ of $e$ and $t(H, g^G)$ within $\alpha$ of $t$, and define the entropies
$s^{n,\alpha}_{e,t}$ and $s(e,t)$ exactly as in equation (\ref{entropydef}).
The proof of Theorem \ref{thm1} carries over almost word-for-word to
show the following.
\begin{thm} \label{thm3}
 For any possible pair $(e,t)$, $s(e,t) =  \max [-I(g)]$, where the
  maximum is over all graphons $g$ with $e(g)=e$ and $t(H,g)=t$.
\end{thm}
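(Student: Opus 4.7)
The plan is to retrace the proof of Theorem~\ref{thm1} from \cite{RS}, isolating the single place where the triangle constraint enters and replacing it by the corresponding property of $H$.

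The three ingredients of that proof are: (i) the Chatterjee--Varadhan large deviation principle \cite{CV} for Erd\H{o}s--R\'enyi random graphs, regarded as random elements of $(\tilde\www,\tilde d_\cut)$ with speed $n^2$ and rate function $I$; (ii) compactness of $\tilde\www$ in the cut metric; and (iii) continuity of the functionals being held fixed. The triangle density appears only in (iii), through continuity of $g\mapsto t(K_3,g)$ with respect to $\tilde d_\cut$. For an arbitrary simple graph $H$ the functional $\tilde g\mapsto t(H,g)$ is likewise continuous in the cut metric: writing $j_H$ for the index of $H$ in the enumeration $\{H_j\}$ from the excerpt, one has $|t(H,f)-t(H,g)|\le 2^{j_H}\,\delta_{_{\hbox{hom}}}(\tilde f,\tilde g)$, and $\delta_{_{\hbox{hom}}}$ is equivalent to $\tilde d_\cut$ as noted earlier. (The edge density $e(g)=t(K_2,g)$ is just the case $H=K_2$.)

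With this continuity in hand, I would define the open set $A_{e,t,\alpha}=\{\tilde g\in\tilde\www:|e(g)-e|<\alpha,\ |t(H,g)-t|<\alpha\}$ together with its closure $\bar A_{e,t,\alpha}$. Applying the LDP upper bound on $\bar A_{e,t,\alpha}$ and the lower bound on $A_{e,t,\alpha}$ to a sequence of $G(n,1/2)$ random graphs, and translating the resulting probability estimates into the counts $Z^{n,\alpha}_{e,t}$ via the uniform factor $2^{-\binom{n}{2}}$, one obtains after sending $n\to\infty$ and then $\alpha\downarrow 0$ the identity
\begin{equation}
s(e,t)=-\inf\{I(g):e(g)=e,\ t(H,g)=t\}.
\end{equation}
The infimum is attained, and may therefore be rewritten as $\max[-I(g)]$, because the constraint set is closed in the compact space $\tilde\www$ and $I$ is lower semicontinuous in $\tilde d_\cut$. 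The only step where something $H$-specific could go wrong is (iii), but continuity of $t(H,\cdot)$ in the cut metric is an immediate consequence of the metric equivalence quoted in the excerpt, so no substantive new obstacle arises and the remaining bookkeeping is word-for-word identical to \cite{RS}.
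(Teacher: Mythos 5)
Your proposal is correct and follows the same route the paper intends: the paper simply asserts that the proof of Theorem~\ref{thm1} from \cite{RS} ``carries over almost word-for-word,'' and you have correctly identified the one $H$-dependent ingredient (continuity of $\tilde g\mapsto t(H,g)$ in the cut metric, via the equivalence with $\delta_{_{\hbox{hom}}}$) and verified that it holds, with the rest of the Chatterjee--Varadhan large-deviation bookkeeping unchanged.
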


Note that if $H$ has $k$ edges the constant graphon
$g_e$ satisfies $t(H,g_e)=e^k$.

\begin{thm}\label{thm4}
For fixed $0<e<1$ the entropy density $s(e,t)$ achieves its maximum
at $t=e^k$ and is not differentiable 
with respect to $t$ at that point.
\end{thm}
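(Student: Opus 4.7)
The plan is to adapt the proof of Theorem~\ref{thm0}, replacing the triangle-specific expansion of $\delta t$ by the analogous expansion for a general graph $H$ with $k$ edges. Writing $g = g_e + \dg$ with $\int\dg = 0$, I expand the product $\prod_{(i,j)\in E(H)}\bigl(e+\dg(x_i,x_j)\bigr)$ and integrate term by term, giving
\begin{equation*}
t(H,g) = \sum_{S\subseteq E(H)} e^{\,k-|S|}\, t(H_S,\dg),
\end{equation*}
where $H_S$ denotes the graph with edge set $S$ (isolated vertices discarded). The $S=\emptyset$ term contributes $e^k$, and each $|S|=1$ term contains a lone factor $\int\dg=0$. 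For $|S|\ge 2$ the integral factors over the connected components of $H_S$, and any isolated-edge component again kills the term. Hence $\delta t := t(H,g)-e^k$ is a sum of products $\prod_C t(C,\dg)$ in which each component $C$ is connected and carries at least two edges.

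The key technical step is the Cauchy--Schwarz bound
\begin{equation*}
|t(C,\dg)| \le Tr(T_\dg^2)
\end{equation*}
valid for every connected graph $C$ with $\ge 2$ edges. Since $C$ is connected with at least two edges, two of its edges share a common vertex; label them $(1,2)$ and $(1,3)$. Bounding the remaining factors by $|\dg|\le 1$ and applying Cauchy--Schwarz in $x_2$ and $x_3$ for fixed $x_1$ yields
\begin{equation*}
|t(C,\dg)| \le \int_0^1 \Big(\int_0^1 |\dg(x,y)|\,dy\Big)^{\!2} dx \le \int_0^1\!\!\int_0^1 \dg(x,y)^2\,dx\,dy = Tr(T_\dg^2).
\end{equation*}
Since $Tr(T_\dg^2)\le 1$, a product of such factors over several components is also bounded by $Tr(T_\dg^2)$, and summing over $S$ produces $|\delta t|\le C(H,e)\,Tr(T_\dg^2)$.

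Combining with the bound $\delta I \ge f_-(e)\,Tr(T_\dg^2)$ from (\ref{Ibelow})---which depends only on $e$ and not on $H$---I conclude
\begin{equation*}
I(g)-I_0(e) \ge c(H,e)\,|t(H,g) - e^k|
\end{equation*}
for every graphon $g$ with $e(g)=e$. Applying this to an entropy maximizer at $(e,t)$ supplied by Theorem~\ref{thm3} yields
\begin{equation*}
s(e,e^k) - s(e,t) \ge c(H,e)\,|t-e^k|,
\end{equation*}
which simultaneously shows that $t\mapsto s(e,t)$ attains its maximum at $e^k$ and has unequal one-sided derivatives there, so $s$ fails to be differentiable at $t=e^k$. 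The main obstacle is the lemma $|t(C,\dg)|\le Tr(T_\dg^2)$ for every connected $C$ with at least two edges; it rests on the combinatorial observation that every connected simple graph with $\ge 2$ edges contains a pair of edges sharing a vertex, which excludes the trivial case when $H$ is a matching---for then $t(H,g)=e(g)^k$ is determined by $e$ alone and the theorem is vacuous.
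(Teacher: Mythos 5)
Your proof is correct and takes essentially the same route as the paper's: write $g=g_e+\delta g$, expand $t(H,g)-e^k$ into a finite sum over subsets $S$ of $E(H)$, observe that terms carrying a factor $\int\delta g$ vanish, bound the surviving terms by a multiple of $Tr(T_{\delta g}^2)$, and compare with the lower bound $\delta I\ge f_-(e)\,Tr(T_{\delta g}^2)$ coming from (\ref{Ibelow}). The only difference is one of detail: you explicitly prove the Cauchy--Schwarz estimate $|t(C,\delta g)|\le Tr(T_{\delta g}^2)$ for any connected $C$ with at least two edges (and note the degenerate matching case), whereas the paper simply asserts that the remaining terms are controlled by $Tr(T_{\delta g}^2)$.
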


\begin{proof} Following the proof of Theorem \ref{thm0}, we write 
$g = g_e + \delta g$ and expand both $I(g)$ and $t(H,g)$ in terms
of $\delta g$. The estimate (\ref{Ibelow}) still 
applies.  The only difference is the expansion of $t(H,g)$. 

Since $t(H,g)$ is the integral of a polynomial expression in $g$, we can expand
$\delta t$ as a polynomial in $\delta g$.  This must take the form
\begin{eqnarray} 
\delta t &=& \int_{[0,1]^2} h_1(x,y) \delta g(x,y)\, dx dy + 
\int_{[0,1]^4} h_2(w,x,y,z) \delta g(w,x) \delta g(y,z)\, dw dx dy dz \cr
&& + \int_{[0,1]^3} h_3(x,y,z) \delta g(x,y) \delta g(y,z)\, dxdydz + \cdots,
\end{eqnarray}
where the non-negative functions $h_1(x,y)$, $h_2(w,x,y,z)$,
$h_3(x,y,z)$, etc., are computed from the graphon from which we are
perturbing. However, that graphon is a constant $g_e$, so each
function $h_i$ is also a constant.  Thus there are non-negative
constants $c_1$, $c_2$, $\ldots$, such that
\begin{eqnarray} \label{bigsum}
\delta t = c_1 \int_{[0,1]^2} \delta g(x,y) \, dxdy\ &+& c_2 \int_{[0,1]^4} \delta g(w,x)\delta
g(y,z)\, dw dx dy dz \cr 
&+& c_3 \int_{[0,1]^3} \delta g(x,y) \delta g(y,z)\, dx dy dz + \cdots
\end{eqnarray}
The first two terms integrate to zero, while any subsequent terms are 
bounded by a multiple of $Tr(T_{\delta g}^2)$. Since there are only a finite 
number of terms, $|\delta t|$ is bounded above by a constant multiple of 
$Tr(T_{\delta g}^2)$ while $\delta I$ is bounded below by a constant 
multiple of $Tr(T_{\delta g}^2)$. Combining these observations yields
the analog of Theorem \ref{thm0}, and we conclude that 
$s(e,t)$ cannot have a 2-sided derivative with respect to $t$ at $t=e^k$.
\end{proof}
 
A more careful analysis of the terms in the sum (\ref{bigsum}) shows that each term is either
positive-definite, is dominated by a positive-definite term, or scales as $Tr(T_{\delta g}^2)^{3/2}$ or higher, implying 
that the concavity of $s(e,t)$ just below the curve $t=e^k$ is the same as for
the triangle model. However, this analysis is not needed for the proof of Theorem 
\ref{thm4} and has been omitted. 

There do exist some graphs $H$, such as ``$k$-stars'' with $k$
edges and one vertex on all of them, such that the lowest value of $t$
for fixed $e$ is on the `Erd\H{o}s-R\'enyi curve', $t=e^k,\ 0<e<1$.
For such graphs the analysis of what happens for $\delta t<0$
is moot and $s(e,t)$ may have a 1-sided derivative at $(e,e^k)$. 
 
\section{Legendre transform and exponential random graphs}

We return temporarily to the special case in which $H$ is a triangle.
Note that it
has been fundamental to our analysis to use the optimization
characterization of $s(e,t)$ of Theorem \ref{thm1} (Theorem 3.1 in \cite{RS}). 
Treating this
as an optimization with constraints, one might naturally introduce
Lagrange multipliers $\beta_{1}, \beta_2$ and consider the following 
optimization system,
\begin{equation}
\max_g [-I(g)+\beta_1 e(g) + \beta_2 t(g)];\ e(g)=e;\ t(g)=t,
\end{equation}
namely maximize
\begin{equation}\label{maxwithbeta}
\Psi_{\beta_1,\beta_2}(g)=-I(g)+\beta_1 e(g)+\beta_2 t(g)
\end{equation}
for fixed $(\beta_1,\beta_2)$ and then adjust $(\beta_1,\beta_2)$ to
achieve the desired values of $e(g)$ and $t(g)$. 
The `free energy density' 
\begin{equation}
\psi(\beta_1,\beta_2)=\max_g \Psi_{\beta_1,\beta_2}(g)
\end{equation}
is directly related to the normalization in exponential random graph
models and basic information in such models is simply obtainable from
it \cite{N, CD, RY, AR}. It can be considered the Legendre transform
of $s(e,t)$, but since the domain of $s(e,t)$ is not convex, the
relationship between $s(e,t)$ and $\psi(\beta_1,\beta_2)$ must be more
complicated than is common for Legendre transforms. 
In particular, although it has been proven
(\cite{CD, RY}) that $\psi(\beta_1,\beta_2)$ has singularities as a
function of $(\beta_1,\beta_2)$ (see Figure 2) it does not seem
straightforward to use this to prove singularities in $s(e,t)$. This is what
necessitated the different approach we have taken
here. We will try to clarify the relationship between
$\psi(\beta_1,\beta_2)$ and $s(e,t)$ through differences in the
optimization characterizations of these quantities.

\begin{figure}[h]
\vskip.4truein
\includegraphics[width=3in]{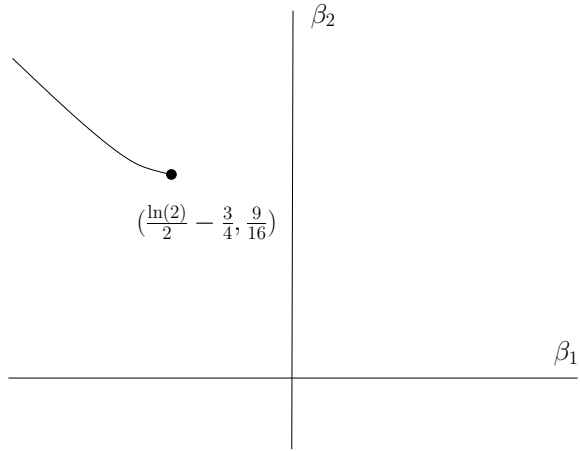}
\caption{The curve of all singularities of $\psi(\beta_1,\beta_2)$,
  for 
$\beta_2>-1/2$}
\label{trans}
\end{figure}

As one crosses the curve in Figure 2 by increasing $\beta_2$ at fixed
$\beta_1$, the unique graphon maximizing 
$\Psi_{\beta_1,\beta_2}(g)$ jumps from lower to higher value of
$e(g)$, but still $t(g)=e(g)^3$ \cite{CD,RY}. We emphasize that whenever
$\beta_2>-1/2$, one is on the Erd\H{o}s-R\'enyi curve $t=e^3$
indicated in Figure 1 \cite{CD,RY}. This is significant in interpreting the
singularities of $s(e,t)$ and $\psi(\beta_1,\beta_2)$.  The
singularities or `transition' characterized in Theorem \ref{thm0} and
associated with {\it crossing} the Erd\H{o}s-R\'enyi curve is presumably
between graphs of different character but similar densities; we expect
that graphons maximizing $s(e,t)$, for $t> e^3$, are related to those
(discussed below)
for the upper boundary of its domain $R$, while for $t< e^3$ they are
related to those for the lower boundary of $R$. (The latter are the
subject of \cite{RS, AR}.) On the other hand, the transition in Figure 2,
associated with varying $(\beta_1,\beta_2)$,
is between graphs of similar character (independent edges) but
different densities. This phenomenon is unrelated to the transition of
Theorem \ref{thm0}, although still associated with the Erd\H{o}s-R\'enyi
curve, and which we understand as follows.

Assume one optimizes $\Psi_{\beta_1,\beta_2}(g)$ for fixed
$(\beta_1,\beta_2)$, where $(\beta_1,\beta_2)$ is adjusted so that
maximizing graphons $g$ satisfy $e(g)=e$ and $t(g)=t$ to match the
desired values of $(e,t)$ in which we are interested for $s(e,t)$. It
may happen that for special $(\beta_1,\beta_2)$ there are also
optimizing $g$ with other densities, $(e(g),t(g))\ne
(e,t)$. This degeneracy is what is occurring precisely for the
$(\beta_1,\beta_2)$ on the singularity curve of Figure 2. All
such $g$ clearly solve the maximization problem for $s[e(g),t(g)]$;
they are appearing together when we fix $(\beta_1,\beta_2)$ because
the value of $\Psi_{\beta_1,\beta_2}(g)$ happens to be the same for all these $g$, 
a phenomenon of no particular relevance to the original
optimization problem of $s(e,t)$. So in this sense degenerate
solutions in the Lagrange multiplier method can be misleading; they
point to a `transition' which is foreign to the maximization problem
for $s(e,t)$. We next
consider other features of the Lagrange multiplier method.

One issue of importance to those who study exponential random graph
models is that for no $\beta_2$ is there a maximizer $g$ of the free
energy density $\Psi$ satisfying $t(g) > e(g)^3$, though there clearly are
such optimizers of the entropy density $s$ as we see for instance from
Figure 1.

\begin{thm}\label{thm5}
For every $\beta_2$ and every maximizer $g$ of $\Psi(g)$, $t(g) \le e(g)^3$.
\end{thm}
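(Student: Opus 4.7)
The plan is to compare any $\Psi$-maximizer $g$ to the constant graphon $g_e\equiv e$ with matching edge density $e=e(g)$. Since the $\beta_1$ terms cancel, I would write
\begin{equation*}
\Psi(g)-\Psi(g_e) = [I_0(e)-I(g)] + \beta_2[t(g)-e^3] \ge 0,
\end{equation*}
where the last inequality holds because $g$ is a maximizer. Jensen's inequality applied to the strictly convex $I_0$ gives $I(g)\ge I_0(e)$, with equality if and only if $g\equiv e$ almost everywhere.

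First I would handle the range $\beta_2\le 0$. Setting $\delta t := t(g)-e^3$ and assuming for contradiction $\delta t>0$, both bracketed terms above are nonpositive, so their sum is $\le 0$ with equality forcing $I(g)=I_0(e)$ and $\beta_2\,\delta t=0$. Strict convexity of $I_0$ then yields $g\equiv e$ and hence $\delta t=0$, a contradiction. This settles $t(g)\le e^3$ for every $\beta_2\le 0$.

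For $\beta_2>0$, the term $\beta_2\,\delta t$ is positive and can outweigh the Jensen penalty $I(g)-I_0(e)$, so the naive comparison with $g_e$ is inconclusive. Here I would appeal to the results of Chatterjee–Diaconis \cite{CD} and Radin–Yin \cite{RY}, cited in the discussion immediately preceding Theorem~\ref{thm5}, which show that for every $\beta_2>-1/2$ the maximizer of $\Psi_{\beta_1,\beta_2}$ is a constant graphon and therefore lies on the Erd\H{o}s–R\'enyi curve $t=e^3$. Hence $t(g)=e(g)^3\le e(g)^3$ in the remaining regime, finishing the theorem.

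The main obstacle is the case $\beta_2>0$. Inequality \eqref{deltat1} from the proof of Theorem~\ref{thm0} supplies $\delta t\le (3e+1)\,\delta I/f_-(e)$ whenever $\delta t>0$, and combining this with the displayed identity gives $\Psi(g)-\Psi(g_e)\le \delta I\bigl[\beta_2(3e+1)/f_-(e)-1\bigr]$, which is nonpositive and hence yields the contradiction directly when $\beta_2\le f_-(e)/(3e+1)$. For larger $\beta_2$ the simple comparison breaks down because the cost of concentrating $\delta g$ (which is quadratic in the $L^2$-norm) can be smaller than the $\beta_2$-weighted triangle gain for a rank-one perturbation of $g_e$; overcoming this is precisely what the symmetrization arguments of \cite{CD, RY} accomplish, and my plan is to invoke them rather than reprove them here.
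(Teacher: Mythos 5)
Your proposal is correct and follows essentially the same route as the paper: both arguments compare $\Psi(g)$ to $\Psi(g_e)$ for the constant graphon of matching edge density, using Jensen (strict convexity of $I_0$) to handle the entropy term and the sign of $\beta_2$ to handle the triangle term, and both appeal to Chatterjee--Diaconis for the non-negative--$\beta_2$ regime (the paper cites their Theorem 4.2 to conclude that any maximizer with $t(g')>e(g')^3$ must have $\beta_2<0$, which is just the contrapositive of the fact you invoke). The only cosmetic difference is that you split into explicit cases $\beta_2\le 0$ and $\beta_2>0$, while the paper assumes $t(g')>e^3$ at the outset, deduces $\beta_2<0$ from \cite{CD}, and then runs the single-case comparison.
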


\begin{proof} Suppose the graphon $g^\prime$ satisfies 
$t(g^\prime) > [e(g^\prime)]^3$ and maximizes the free energy

\begin{equation}  \Psi(g) = -I(g) + \beta_1 e(g) + \beta_2 t(g),
\end{equation} 
for some $\beta_1$ and $\beta_2$. It follows from Theorem 4.2 in \cite{CD} 
that $\beta_2 < 0$.  Let $g_e$ be the constant graphon with the same edge density
as $G^\prime$. Since $t>e^3$, $\beta_2(g_e) > \beta_2 t(g^\prime)$. Also, $-I(g_e) > -I(g^\prime)$,
since for given edge density $-I(g)$ is maximized at $g_e$. But then $\Psi(g_e) > \Psi(g^\prime)$, and $g^\prime$ is
not a maximizer, which is a contradition. 
\end{proof}

\section{Optimizing graphons}

Having established in Theorem \ref{thm0} a phase transition on the Erd\H{o}s-R\'enyi curve, we consider the forms of the graphons
that maximize $s(e,t)$ on each side of the curve. We previously \cite{RS} determined the optimizing graphons
on the lower boundary of the region $R$, including the scalloped curve. We now compute the optimizing graphons on 
the upper boundary and on the curve $e=1/2$ below the Erd\H{o}s-R\'enyi line.   

\subsection{The upper boundary}

\begin{thm}\label{thm6}
If $g$ maximizes $s(e,e^{3/2})$ it takes the form
\begin{equation} \label{top} g(x,y) = \begin{cases} 1 & x,y < \sqrt{e} \cr 0 
& \hbox{otherwise.}\end{cases}
\end{equation}
up to a measure-preserving transformation.
\end{thm}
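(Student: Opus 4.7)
The plan is to show that any graphon $g$ satisfying $e(g) = e$ and $t(g) = e^{3/2}$ must be equivalent, under a measure-preserving transformation, to the indicator function of $A \times A$ for some measurable $A \subseteq [0,1]$ with $|A| = \sqrt{e}$. Once this structural rigidity is established, such a $g$ is automatically the (unique, up to equivalence) maximizer of $-I$ subject to the density constraints, so the theorem follows at once. Notably, the argument does not use the specific form of the rate function $I$ at all: the upper boundary of $R$ is sufficiently tight that the density constraints themselves pin down $g$.

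First I translate the densities into operator-theoretic language using the Hermitian trace-class operator $T_g$ on $L^2([0,1])$ with kernel $g$, whose eigenvalues I denote by $\{\lambda_i\}$. Then $\int_{[0,1]^2} g(x,y)^2\,dxdy = Tr(T_g^2) = \sum_i \lambda_i^2$ and $t(g) = Tr(T_g^3) = \sum_i \lambda_i^3$. Since $g$ takes values in $[0,1]$, $g^2 \le g$ pointwise, so $\sum_i \lambda_i^2 \le e$. Mimicking the proof of Lemma \ref{lem} then yields the chain
\begin{equation}
t(g) = \sum_i \lambda_i^3 \;\le\; \sum_i |\lambda_i|^3 \;\le\; \Bigl(\max_j|\lambda_j|\Bigr) \sum_i \lambda_i^2 \;\le\; \Bigl(\sum_i \lambda_i^2\Bigr)^{3/2} \;\le\; e^{3/2}.
\end{equation}

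Next, the hypothesis $t(g) = e^{3/2}$ forces equality at every step, and I read off the consequences one by one: all nonzero eigenvalues are positive; every nonzero $|\lambda_i|$ attains $\max_j|\lambda_j|$; $T_g$ has rank exactly one; and $g^2 = g$ almost everywhere, so $g$ is $\{0,1\}$-valued. From the rank-one condition, $g(x,y) = \mu\,\phi(x)\phi(y)$ for some $\mu > 0$ and a nonnegative unit eigenfunction $\phi$. From the $\{0,1\}$-valuedness, on the set where $\phi > 0$ the product $\mu\,\phi(x)\phi(y)$ must equal $1$, which forces $\phi$ to be constant on its support. Thus $g = 1_{A\times A}$ for a measurable set $A$, and $e(g) = |A|^2 = e$ gives $|A| = \sqrt{e}$. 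A measure-preserving bijection sending $A$ to $[0,\sqrt{e}]$ then produces the stated form.

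The main obstacle is the careful bookkeeping of the four equality cases: collapsing any single inequality in isolation is not enough, but collapsing all four simultaneously is precisely the rigidity needed to recover the clique form. Everything else is routine.
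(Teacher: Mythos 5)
Your proof is correct and follows essentially the same route as the paper: bound $t(g)$ via the eigenvalues of $T_g$ (the paper cites its Lemma~3.3 for $Tr(T_g^3)\le Tr(T_g^2)^{3/2}$ where you re-derive it inline), combine with $Tr(T_g^2)=\iint g^2\le\iint g=e$, and then read off the two equality conditions (rank one and $\{0,1\}$-valued) to pin down $g=1_{A\times A}$. Your explicit remark that the rate function $I$ never enters---because the constraint set $\{e(g)=e,\,t(g)=e^{3/2}\}$ is already a single reduced graphon---is a tidy way of phrasing what the paper leaves implicit, but the substance of the argument is identical.
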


\begin{proof}

Let $T_g$ be the operator on $L^2[0,1]$ with integral kernel $g$. We
already know that $t = Tr(T_g^3) \le Tr(T_g^2)^{3/2}$, with equality
if and only if $T_g$ is rank 1. However,
\begin{equation}
Tr(T_g^2) = \iint g(x,y) g(y,x) dx dy = \iint g(x,y)^2 dx dy \le
\iint g(x,y) dx dy = e,
\end{equation}
 with equality if and only if $g(x,y)^2 =
g(x,y)$ almost everywhere, i.e. $g(x,y) = 0$ or 1 almost everywhere.

Combining the two results, we have that $t \le e^{3/2}$, with equality
if and only if two conditions are met: $g(x,y) = \alpha(x) \alpha(y)$
for some positive function $\alpha$, (i.e. $T_g$ has rank one), and
$g(x,y)$ is a 0--1 function, implying that $\alpha(x)$ is a 0-1
function.

By applying a measure-preserving transformation to $[0,1]$ we can
assume that $\alpha$ is the characteristic function of an interval
$[0,s]$. We then compute $e=s^2$ and $t=e^3$. In short, each point on
the upper boundary for the allowed region in the $e$--$t$ plane is
achieved by a unique reduced graphon, namely the equivalence class of
the graphon (\ref{top}).
\end{proof}

\subsection{The special case of $e=1/2$}

\begin{thm} When $e=1/2$ and $t \le e^3$, the graphon
\begin{equation}\label{minimizer}
\tilde g(x,y) = \begin{cases} 1/2 + \epsilon  & x < 1/2 < y \hbox{
    or } x > 1/2 > y \cr 1/2 - \epsilon & x,y < 1/2 \hbox{ or } x,y
  > 1/2, \end{cases}
\end{equation}
where $\epsilon = (e^3-t)^{1/3}$, maximizes $s(e,t)$. Furthermore,
every maximizing graphon is of the form $\tilde g_\sigma$ for some 
mearsure-preserving transformation $\sigma$. 
\end{thm}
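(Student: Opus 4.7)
The plan is to parametrize $g = g_{1/2} + \delta g$, extract two clean lower bounds on $\delta I := I(g) - I(g_{1/2})$, and show both are saturated exactly by the graphons $\tilde g_\sigma$. Since $I_0$ is symmetric about $1/2$ with $I_0'(1/2) = 0$, the function $\psi(u) := I_0(1/2 + u) - I_0(1/2)$ is even and nonnegative, with Taylor expansion $\psi(u) = \sum_{k \ge 1} a_k u^{2k}$ where $a_k = 4^{k-1}/[k(2k-1)] > 0$; thus $\psi(u) = \tilde\psi(u^2)$ for a strictly convex function $\tilde\psi$ on $[0,1/4]$. The edge constraint becomes $\int \delta g = 0$, and as in the proof of Theorem \ref{thm0},
$$\epsilon^3 := e^3 - t = -\mathrm{Tr}(T_{\delta g}^3) - \tfrac{3}{2} \|T_{\delta g}\phi_1\|^2.$$

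The first step would be an $L^2$ lower bound on $\delta g$: using $\|T_{\delta g}\phi_1\|^2 \ge 0$ together with the chain of inequalities from Lemma \ref{lem}, one obtains
$$\epsilon^3 \le -\mathrm{Tr}(T_{\delta g}^3) \le [\mathrm{Tr}(T_{\delta g}^2)]^{3/2} = \|\delta g\|_2^3,$$
so $\|\delta g\|_2 \ge \epsilon$. The second step would be Jensen's inequality applied to $\tilde\psi$ on the probability space $[0,1]^2$:
$$\delta I = \int \tilde\psi(\delta g(x,y)^2)\,dx\,dy \ge \tilde\psi\Big(\int \delta g(x,y)^2\,dx\,dy\Big) = \psi(\|\delta g\|_2) \ge \psi(\epsilon),$$
where the last inequality uses monotonicity of $\psi$ on $[0,1/2]$. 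Since $\tilde g$ takes values $1/2 \pm \epsilon$ with $I_0$ symmetric about $1/2$, a direct computation gives $I(\tilde g) - I(g_{1/2}) = \psi(\epsilon)$, so $\tilde g$ attains the bound and is therefore a maximizer.

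For uniqueness, I would trace the equality conditions. Strict convexity of $\tilde\psi$ makes Jensen an equality only when $\delta g^2$ is a.e.\ constant, forcing $|\delta g| \equiv \epsilon$. Equality in the first step forces $\|T_{\delta g}\phi_1\|^2 = 0$, $\mathrm{Tr}(T_{\delta g}^3) \le 0$, and that $T_{\delta g}$ has rank one (from Lemma \ref{lem}). Hence $\delta g(x,y) = -\epsilon\, v(x)v(y)$ for some unit $v \perp \phi_1$, with $|v(x)v(y)| = 1$ a.e., which forces $|v| \equiv 1$; together with $\int v = 0$, this means $v$ equals $+1$ on a set $A$ of measure $1/2$ and $-1$ on its complement, giving $\tilde g$ up to the measure-preserving transformation sending $[0,1/2]$ onto $A$. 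The main technical point will be justifying the Jensen step cleanly, in particular the strict convexity of $\tilde\psi$ from positivity of every $a_k$; this is a short computation from the explicit formula $I_0''(u) = 1/[2u(1-u)]$.
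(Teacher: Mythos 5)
Your proposal is correct, and it rests on exactly the two ingredients the paper uses: the rank-one trace inequality from Lemma \ref{lem}, and the special property of $I_0$ at $e=1/2$ that $I_0(1/2+u)-I_0(1/2)$ is a power series in $u^2$ with positive coefficients. I checked the coefficient claim $a_k = 4^{k-1}/[k(2k-1)]$ and the equality-tracing, and they hold. The organization is somewhat different, though. The paper's argument is constructive and a bit informal: it asserts that one can build a perturbation $\delta g$ that simultaneously maximizes $-\mathrm{Tr}(T_{\delta g}^3)$ for fixed $\mathrm{Tr}(T_{\delta g}^2)$, kills the quadratic term $3e\langle\phi_1,T_{\delta g}^2\phi_1\rangle$, and minimizes $\delta I$ for fixed $\mathrm{Tr}(T_{\delta g}^2)$, and concludes that such a perturbation ``must therefore'' be the global minimizer---after which it restricts to rank-one $\delta g$ and invokes convexity. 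You instead derive a genuine lower bound
\begin{equation}
\delta I \;\ge\; \tilde\psi\Bigl(\|\delta g\|_2^2\Bigr) \;\ge\; \tilde\psi(\epsilon^2) = \psi(\epsilon)
\end{equation}
valid for \emph{every} admissible $\delta g$, with the first step Jensen for the strictly convex $\tilde\psi$ and the second the monotone bound $\|\delta g\|_2\ge\epsilon$ coming from $\epsilon^3\le -\mathrm{Tr}(T_{\delta g}^3)\le \|\delta g\|_2^3$, then show $\tilde g$ attains it. This buys you two things over the paper's version: it removes the implicit step of justifying why the minimizer may be sought among rank-one perturbations, and it makes the uniqueness claim drop out cleanly by tracing equality in each inequality (Jensen forces $|\delta g|$ constant, monotonicity forces $\|\delta g\|_2=\epsilon$, and the saturated trace inequality forces rank one with $\langle v,\phi_1\rangle=0$). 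The one small thing to make explicit in a polished write-up is the existence of a measure-preserving bijection of $[0,1]$ carrying $\{v=1\}$ onto $(1/2,1]$ when $\{v=1\}$ has measure $1/2$, which is standard but should be cited or stated.
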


\begin{proof} 

We use perturbation theory, writing $g(x,y) = e + \delta g(x,y)$. 
When $e=1/2$, the $n^{th}$ derivative $I_0^{(n)}(x)$ is positive for $n$
even and zero for $n$ odd.  This means that
$[{I_0(e+x)-I_0(e)}]/{x^2}$ is a convex function of $x^2$ (since it is
a power series in $x^2$ with positive coefficients).  This allows us
to find a formula for $\delta g$ that simultaneously maximizes
$-Tr(T_\dg^3)$ for fixed $Tr(T_\dg^2)$, minimizes the
positive-definite quadratic term in $\delta t$ (to be zero), and
minimizes $\delta I$ for fixed $Tr(T_\dg^2)$. This must therefore be a
minimizer of the rate function and a maximizer of the entropy. We assume throughout
that $\iint \delta g(x,y) dx\, dy = 0$. 

\begin{lem} Let $T_\dg$ be a rank-one operator: $T_\dg f=c \langle \alpha,
  f \rangle \, \alpha$ where $\langle \alpha, \alpha\rangle = 1$. 
Then $\delta t = c^3$.
\end{lem}

\begin{proof}  Since $c \bra g_1, \alpha \ket \bra \alpha, g_1 \ket = \int_{[0,1]^2} \dg(x,y) = 0$,
we must have $\bra g_1, \alpha \ket =0$. This makes the quadratic term 
$3e \bra g_1,T_\dg^2 g_1 \ket$ identically zero. Since $T_\dg$ is 
rank one with unique eigenvalue $c$, $\delta t = Tr(T_\dg^3) = c^3$.
\end{proof}

Now we try to minimize $\int_{[0,1]} I[e+c\alpha(x)\alpha(y)]\, dxdy$. By convexity,
this is minimized when $[\alpha(x)\alpha(y)]^2$ is constant, which
means that $\alpha(x)^2$ is constant. Since the integral of $\alpha$
is zero, we must have $\alpha(x)=+1$ on a set of measure 1/2 and $-1$
on a set of measure 1/2.  Up to measure-preserving automorphism, we
can assume that
\begin{equation}
\alpha(x) = \begin{cases} 1 & x>1/2; \cr -1 & x < 1/2. \end{cases}
\end{equation}
This means that any graphon that minimizes $I(g)$ for fixed $e=1/2$
and fixed $t\le e^3$ must be $\tilde g$, up to a measure-preserving transformation.  
\end{proof}

\subsection{Lagrange multipliers on the $e=1/2$ line}

We proved in Theorem \ref{thm5} that maximizing graphons for $s(e,t)$
for $t > e^3$ cannot be found using Lagrange multipliers. We now show
that this also applies
to certain values of $t<e^3$, starting with $e=1/2$ and $t$ close to $1/8$. 

When $e=1/2$, knowing precisely the optimizing graphon $\tilde g$ allows us to compute $s(e,t)$:
\begin{equation} \label{halfentropy} s(\frac12,t) = \frac{-1}{2}\left [I_0\left (\frac12 + \epsilon\right )+I_0\left (\frac12 - \epsilon\right )\right ] =
-I_0\left (\frac12 + \epsilon\right ),
\end{equation}
for all $t<1/8$, since $I_0(u)=I_0(1-u)$. 

 Now consider the optimization using Lagrange multipliers. The
 Euler-Lagrange equations are:
\begin{equation}\label{E-L}
-I_0'[g(x,y)]  + \beta_1 + \beta_2 h(x,y)=0,
\end{equation}
where 
\begin{equation}\label{h}
h(x,y) = 3 \int_{[0,1]} g(x,z) g(y,z)\, dz
\end{equation}
is the first variation of $t(g)$ with respect to $g(x,y)$. For our
$g=\tilde g$,  this becomes:
\begin{eqnarray}
\beta_1 + 3 \beta_2\left(\frac{1}{4}-\epsilon^2\right) & = & I_0'\left (\frac{1}{2}+\epsilon\right ) 
= \frac{1}{2}\ln\left[\frac{\frac{1}{2}+\epsilon}{\frac{1}{2}-\epsilon}\right] \cr
\beta_1 + 3 \beta_2\left ( \frac{1}{4}+\epsilon^2\right ) & = & I_0'\left (\frac{1}{2}-\epsilon\right) 
= \frac{1}{2}\ln\left[\frac{\frac{1}{2}-\epsilon}{\frac{1}{2}+\epsilon}\right],
\end{eqnarray}
which are satisfied if and only if
\begin{equation}\label{betaEL}
\beta_2 = - \frac{4}{3}\beta_ 1= \frac{I_0'\left (\frac{1}{2}-\epsilon\right) -
  I_0'\left(\frac{1}{2}+\epsilon\right)}{6\epsilon^2}=
-\frac{1}{6\epsilon^2}\ln \left[ \frac{\frac{1}{2} +\epsilon}
{\frac{1}{2}-\epsilon}\right].
\end{equation} 
Notice that $\beta_1$ and $\beta_2$ diverge as 
$\epsilon \downarrow 0$ (equivalently, as $t \uparrow 1/8$). 

However, solutions to the Euler-Lagrange equations are not necessarily local maxima of $\Psi$. 
It is easy
to check by differentiation of (\ref{halfentropy}) that there are $0 < c_1 < c_2 < 1/8$ such
that $s(1/2,t)$ is strictly concave on $(0,c_1)$ but strictly convex
on $(c_2,1/8]$. Convexity implies that $\tilde g$ is not a maximizer
  for $\Psi(\beta_1,\beta_2)$ for $c_2<t< 1/8$, but is rather a local \em{minimizer}
  with respect to variation of $t$, and so there are no
  $(\beta_1,\beta_2)$ which can lead to the maximizers of $s (1/2,t)$
  for $t$ just below $1/8$. While the precise calculation was done for $e=1/2$ using 
equation (\ref{halfentropy}), this phenomenon is simply due to
  inequality (\ref{strong}), and actually occurs for all $e$, not
  just for $e=1/2$.  In fact from the proof of Theorem \ref{thm4} this
  phenomenon can be extended to subgraphs $H$ other than triangles.
  [However, as noted above, for some $H$ the Erd\H{o}s-R\'enyi curve is
  actually the lower boundary of the domain of the entropy, in which case there
  are no `missing' points below it.]
\vskip.2truein
\noindent {\bf Acknowledgements:} We gratefully acknowledge useful 
discussions with Mei Yin.
\vskip.2truein

\end{document}